\def\0{\mathbf 0}
\def\1{\mathbf 1}
\numberwithin{equation}{section}
\newcounter{thm}[section]
\numberwithin{thm}{section}
\newtheorem{theorem}[thm]{Theorem}
\newtheorem{lemma}[thm]{Lemma}
\newtheorem{corollary}[thm]{Corollary}
\newtheorem{definition}[thm]{Definition}
\begin{document}
\baselineskip=17pt

\title{Measures on Boolean algebras}

 \author{Thomas Jech \\
    e-mail: thomas.jech@gmail.com    }
\date{}

\maketitle

\renewcommand{\thefootnote}{}

\footnote{2010 \emph{Mathematics Subject Classification}: Primary 03G05, 28A60.}

\footnote{\emph{Key words and phrases}: Boolean algebras, measure}

\renewcommand{\thefootnote}{\arabic{footnote}}
\setcounter{footnote}{0}

\begin{abstract}
We present a necessary and sufficient condition for a Boolean algebra to carry a finitely additive measure.
\end{abstract}

\section{Introduction.}

Since the 1930's there has been a considerable interest in describing, ``algebraically'', those Boolean algebras that carry a (strictly positive) finitely additive measure. The problem is somewhat vague but the idea is that the necessary and sufficient condition should be, on its face, removed from measure
theoretic concepts, and if possible not too complicated. In this paper we introduce a property that
we believe is a step in that direction.

A \emph{Boolean algebra} is an algebra $B$ of subsets of a given
nonempty set $S$, with Boolean operations $a \cup b$, $a \cap b$,
$-a=S-a$, and the zero and unit elements $\mathbf 0=\emptyset$ and
$\mathbf 1=S$, and partial ordering $a\leq b$ iff $a\subset b$.

\begin{definition}\label{meas}
A \emph{measure}  on a Boolean
algebra $B$ is a real valued function $m$ on $B$ such
that
\begin{itemize}
\item [(i)] $m(\mathbf 0) = 0,\ m(a) > 0 \text{ for } a \not = \mathbf 0
        \text{, and } m(\mathbf 1) = 1, $
\item [(ii)] $m(a \cup b) =m(a) + m(b) \text{ if } a\cap b =\mathbf
0.$
\end{itemize}

A \emph{measure algebra} is an atomless Boolean algebra that
carries a measure.
\end{definition}

We let $B^+=B-\{\0\}$; an antichain is a set $A\subset B^+$ of pairwise disjoint elements.
Every measure algebra satisfies ccc, the \emph{countable chain condition}, i.e. every
antichain is countable.

In the early history, the attention was focused on $\sigma$-additive measures on Boolean 
$\sigma$-algebras. The first attempt at an algebraic description was by John
von Neumann who, in 1937, observed that if $B$ carries a $\sigma$-additive
measure then it satisfies the weak distributive law, in addition to the countable
chain condition. He then asked whether these two properties are sufficient for
measurability. See Problem 163 in the Scottish book \cite{Sc}.

A major advance toward an algebraic description of measure algebras was the work of Dorothy Maharam \cite{Mah} who, in 1947, introduced continuous submeasures (we now call such algebras Maharam algebras) and:

(1) gave a characterization of Maharam algebras in terms of the sequential topology,

(2) observed that a Suslin algebra (if it exists) is a counterexample to the von Neumann Problem, and

(3) asked whether a Maharam algebra must carry a measure.

The last question morphed into the famous ``Control Measure Problem'' in Functional Analysis.

As for finitely additive measures,  Alfred Tarski conjectured in late 1940's that it suffices that
$B^+$ is the union of countably many sets $C_n$ such that for each $n$, every antichain 
$A\subset C_n$ has at most $n$ elements, see \cite{HT}. 
This conjecture was refuted by Haim Gaifman in 1964, see \cite{Ga}.

In 1959, John Kelley reduced von Neumann's Problem to finitely additive measures and proved a 
necessary and sufficient condition for $B$ to carry a finitely additive measure, see \cite{Ke}:

(1) A Boolean $\sigma$-algebra $B$ carries a $\sigma$-additive measure if and only if
it is weakly distributive and carries a finitely additive measure.

(2) B is a measure algebra if and only if $B^+=\bigcup_n C_n$ such that each $C_n$
has a positive intersection number.

It turns out that Kelley rediscovered (1) which was proved (before 1950) by A. G. Pinsker,
see \cite{KVP}. As for Kelley's Intersection Number, we return to it in Section 4.

In 1980, Michel Talagrand \cite{Ta}, working on the Control Measure Problem,
introduced two properties of submeasures on Boolean  algebras, 
\emph{exhaustive} submeasures and \emph{uniformly exhaustive} submeasures.
In 2006 he solved the Control Measure Problem (see \cite{Tal}) by constructing an exhaustive
submeasure on a countable Boolean algebra that is not uniformly exhaustive.

In 1983, Nigel Kalton and James W. Roberts \cite{KR} introduced an ingenious 
combinatorial method and proved that if $B$ carries a uniformly exhaustive
submeasure then $B$ carries a measure. We shall employ their method in 
Section  5.

\section{$M$-ideals}

We employ a different approach. The key concept is that of an ideal on the set of all 
infinite sequences in $B^+$. We define an $M$-ideal and prove that the existence
of such an ideal is a necessary and sufficient condition for $B$ to be a measure
algebra.

\begin{definition}
Consider the set of all infinite sequences $s=\{a_n\}_n$ in $B^+$. A set
$I$ of such sequences is an \emph{$M$-ideal} if it has the following properties:

(M1) If $\{a_n\}_n \in I$ then $\bigwedge_n a_n = \0$, i.e. there is no $a>\0$ such that $a\leq a_n$ for all $n$.

(M2) If $s\in I$ and if $t$ is an infinite subsequence of $s$ then $t\in I$.

(M3) If $\{a_n\}_n \in I$ and $b_n\le a_n$ for all $n$ then $\{b_n\}_n \in I$.

(M4) If $\{a_n\}_n \in I$ and $\{b_n\}_n \in I$ then $\{a_n \cup b_n\}_n$ has an infinite 
subsequence that is in $I$.

(M5) If $\{a^k_n\}_n\in I$ for every $k$, then $\{a^n_n\}_n \in I$.

(M6) If $A_n$ are finite antichains in $B$ and $|A_n|\ge n$, then there exist
$a_n \in A_n$ such that $\{a_n\}_n \in I$.

\end{definition}

If $m$ is a measure on $B$, let $I$ be the set of all $\{a_n\}_n$ such that
$m(a_n)\le 1/n$. Then $I$ is an $M$-ideal.

We shall prove

\begin{theorem} If $B$ has an $M$-ideal then $B$ is a measure algebra.

\end{theorem}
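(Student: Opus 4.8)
The plan is to extract from the $M$-ideal $I$ a strictly positive, uniformly exhaustive submeasure on $B$ and then to invoke the Kalton--Roberts theorem quoted in the introduction, by which a uniformly exhaustive submeasure yields a finitely additive measure; this is the route that the announced use of their method in Section 5 anticipates. (Alternatively one can aim for Kelley's criterion (2): it suffices to write $B^+=\bigcup_n C_n$ with each $C_n$ of positive intersection number, and essentially the same combinatorics governs both targets.) The motivating example, where $\{a_n\}_n\in I$ exactly when $m(a_n)\le 1/n$, is the guide: $I$ is meant to be the family of sequences that are null at rate $1/n$, so a submeasure should be recoverable by reading off, for each $a\in B^+$, the best rate at which $a$ can be made to participate in members of $I$.

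Concretely, I would attach to each $a\in B^+$ a value $\mu(a)\in(0,1]$ gauging how ``large'' $a$ looks to $I$: declare $a$ to be $n$-small when $a$ can be fitted, in a subsequence-stable way, into members of $I$ that witness the rate $1/n$, and set $\mu(a)=\inf\{1/n: a\text{ is }n\text{-small}\}$. The axioms are tailored to make this a submeasure. M3 (coordinatewise shrinking stays in $I$) gives monotonicity, $a\le b\Rightarrow\mu(a)\le\mu(b)$. M1 (no sequence in $I$ has a nonzero lower bound, so in particular no constant sequence lies in $I$) is what forbids an element from being $n$-small for every $n$, and hence forces $\mu(a)>0$ for all $a\in B^+$; together with $\mu(\0)=0$ this is strict positivity. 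Subadditivity $\mu(a\cup b)\le\mu(a)+\mu(b)$ is where M4 enters: the union of two members of $I$ has an infinite subsequence in $I$, and M2 (subsequence closure) together with the diagonal axiom M5 let one upgrade this ``some subsequence'' into a usable quantitative bound by combining countably many witnesses. Normalising so that $\mu(\1)=1$ completes the construction of the submeasure.

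The heart of the argument is uniform exhaustiveness, and this is exactly what M6 delivers. Suppose $\mu$ were not uniformly exhaustive: then there would be an $\varepsilon>0$ and, for each $n$, a finite antichain $A_n$ with $|A_n|\ge n$ all of whose members satisfy $\mu\ge\varepsilon$. Applying M6 to the antichains $A_n$ produces a choice $a_n\in A_n$ with $\{a_n\}_n\in I$, that is, a single sequence of $\mu$-large elements lying in $I$. But membership in $I$ forces the $\mu$-values of its terms to tend to $0$ (this is precisely the sense in which $I$ records null sequences), contradicting $\mu(a_n)\ge\varepsilon$. Hence no such $\varepsilon$ exists and $\mu$ is uniformly exhaustive. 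With $\mu$ a strictly positive, uniformly exhaustive submeasure, the Kalton--Roberts method yields a finitely additive measure $m$ on $B$, and strict positivity of $\mu$ passes to $m$, giving a measure in the sense of Definition~\ref{meas}, which is the substance of the assertion.

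I expect the main obstacle to be the construction of $\mu$ itself rather than the exhaustiveness step, which M6 renders almost formal. The delicate point is to define ``$n$-small'' so that it is genuinely invariant under passing to subsequences (as M2 demands) while remaining refined enough to behave additively across rates; M4 only provides an infinite subsequence of $\{a_n\cup b_n\}_n$ in $I$, so honest subadditivity requires a careful diagonalisation feeding M4, M2 and M5 together and controlling the loss of rate at each combination. A secondary point of care is the Kalton--Roberts passage from the uniformly exhaustive submeasure to an actual measure, whose quantitative bookkeeping is the business of Section 5; the role of the $M$-ideal is precisely to supply, through M1--M6, the strictly positive uniformly exhaustive submeasure that their method takes as input.
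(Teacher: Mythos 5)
Your plan breaks down at exactly the point you yourself flag as delicate: the construction of the submeasure $\mu$. The axioms of an $M$-ideal carry no quantitative content from which honest subadditivity $\mu(a\cup b)\le\mu(a)+\mu(b)$ can be extracted. If ``$a$ is $n$-small'' is defined in the only way the data permits --- $a$ occurs as the $n$-th term of some member of $I$ --- then (M4) is too weak to control the rate of a union: it yields an infinite subsequence of $\{a_n\cup b_n\}_n$ in $I$, but that subsequence may be arbitrarily sparse, so elements that are $n$-small separately only make their union $k$-small for some uncontrolled $k$ far below $n$. No diagonalisation feeding (M2), (M4), (M5) together can repair this, because the paper's own Lemma 3.5 shows what these axioms actually yield: for the fragmentation $C_n=\{a\in B^+: a\ne a_n \text{ for all } \{a_j\}_j\in I\}$, for each $n$ there is \emph{some} finite $k>n$ such that $c=a\cup b\in C_n$ forces $a\in C_k$ or $b\in C_k$; after passing to a subfragmentation this is gradedness. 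Translated into your $\mu$ (say $\mu(a)=2^{-n(a)}$ with $n(a)$ least such that $a\in C_{n(a)}$), gradedness gives only $\mu(a\cup b)\le 2\max(\mu(a),\mu(b))$, a quasi-subadditivity that degrades exponentially under iterated unions. So $\mu$ is not a submeasure, and the Kalton--Roberts theorem on uniformly exhaustive submeasures cannot be invoked as a black box; your acknowledged ``main obstacle'' is not a technical wrinkle but the actual obstruction.

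The paper's route, which is the correct repair, abandons the submeasure entirely and works with the fragmentation itself. Your uniform-exhaustiveness step via (M6) is sound in spirit --- it is precisely the paper's Lemma 3.3, which shows each $C_n$ admits a uniform bound $K_n$ on antichain size ($\sigma$-bounded cc) --- but instead of feeding a submeasure into Kalton--Roberts, the paper adapts their \emph{combinatorial lemma} (three-point sets $A_i\subset P$ plus Hall's marriage theorem) directly to the graded $\sigma$-bounded cc fragmentation, proving in Section 5 that each $C_n$ has Kelley intersection number at least $1/(30K_{n+2}^2)$; Kelley's theorem then yields the strictly positive finitely additive measure. In effect, your parenthetical alternative (Kelley's criterion) is the route that works, but it requires running the Kalton--Roberts combinatorics on the fragmentation, where gradedness substitutes for the subadditivity you cannot obtain.
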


Here is an outline of the proof: First we use the $M$-ideal to construct a
``fragmentation'' $\{C_n\}_n$ of $B$ with certain properties, one of them
witnessing the $\sigma$-bounded chain condition. Then we show that the
fragmentation is ``graded''. In Section 5 we use the Kalton-Roberts method
to show that each $C_n$ has a positive Kelley Intersection Number.
By Kelley, it follows that $B$ carries a measure.

\section{Fragmentations}

\begin{definition}

A \emph{fragmentation} of a Boolean algebra $B$ is a sequence of subsets
$C_1\subset C_2\subset...\subset C_n\subset ...$ such that
$\bigcup_n C_n =B^+$ and for every $n$, if $a\in C_n$ and $a\le b$ then $b\in C_n$.

A fragmentation is \emph{$\sigma$-bounded cc} if for every $n$
there is a constant $K_n$ such that every antichain $A\subset C_n$
has size $\le K_n$.

A fragmentation is \emph{graded} if for every $n$, whenever
$a\cup b\in C_n$ then either $a\in C_{n+1}$ or $b\in C_{n+1}$.

\end{definition}

Let $I$ be an $M$-ideal on $B$; we shall use $I$ to construct a 
graded $\sigma$-bounded cc fragmentation of $B$.

For each $n$ let

$$C_n=\{a\in B^+: a\ne a_n \text{ for all }  \{a_j\}_j \in I \}.$$

We show that  $\{C_n\}_n$ is a fragmentation:

First, if $a\in C_n$ and if $a\le b$ then $b\in C_n$: If not then $b=a_n$ for some
$\{a_n\}_n \in I$, and the sequence obtained from $\{a_n\}_n$ by replacing
$a_n$ by $a\le a_n$ is also in $I$, by (M3), and hence $a\notin C_n$.

Second, we show $C_n\subset C_{n+1}$: If $x \notin C_{n+1}$ then $x=a_{n+1}$
for some $\{a_k\}_k\in I$, and then the sequence $\{a_2, a_3,...,a_{n+1},...\}$,
in $I$ by (M2), witnesses that $x\notin  C_n$.

And third, if $a>\0$ is such that $a\notin C_k$ for all $k$, then there are 
sequences $\{a^k_n\}_n \in I$ such that $a=a_k^k$ for all $k$. But then the constant
sequence $\{a\}_n$ is in $I$ by (M5), contradicting (M1).

\begin {lemma}
Let $N$ be the set of all natural numbers.
For every $n\in N$ and every $\{a_k\}_k \in I$, the set $\{a_k: k\in N\}$ is not a subset of $C_n$.
\end{lemma}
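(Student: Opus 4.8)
The plan is to unwind the definition of $C_n$ and exhibit an explicit element of the set $\{a_k : k\in N\}$ that fails to lie in $C_n$. By definition, $a\in C_n$ holds precisely when $a$ is \emph{not} the $n$-th term of any sequence belonging to $I$. So to establish $\{a_k : k\in N\}\not\subseteq C_n$ it suffices to produce a single index $k$ for which $a_k$ occurs as the $n$-th term of some member of $I$.

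The most direct choice is $k=n$ itself. The given sequence $\{a_k\}_k$ is by hypothesis a member of $I$, and its $n$-th term is $a_n$; hence $a_n$ is exhibited as the $n$-th term of a sequence in $I$, which is exactly the assertion $a_n\notin C_n$. Since $a_n\in\{a_k : k\in N\}$, the set is not contained in $C_n$, and the lemma follows. (Formally, if one had $a_n\in C_n$, then the defining clause applied to the witness $\{a_k\}_k\in I$ would force $a_n\neq a_n$, a contradiction.)

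If a more uniform statement is wanted --- and this is the form that is convenient when the lemma is combined with (M6) to bound antichains inside each $C_n$ --- one can show that in fact \emph{every} term $a_k$ with $k\geq n$ lies outside $C_n$. For such $k$, the tail $\{a_{k-n+1}, a_{k-n+2},\dots\}$ is an infinite subsequence of $\{a_k\}_k$, hence belongs to $I$ by (M2), and its $n$-th term is $a_{(k-n+1)+(n-1)}=a_k$; so again $a_k\notin C_n$. The only point requiring care here is the index bookkeeping in the shift, to ensure that the chosen subsequence is genuinely infinite (which needs $k\geq n$) and that $a_k$ really lands in position $n$.

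I expect no substantive obstacle: the content is purely a matter of reading off the defining property of $C_n$, with (M2) used only to upgrade the single witness $a_n$ to the whole tail. The reason the statement deserves to be isolated as a lemma is its intended later use. Together with (M6), which manufactures a sequence in $I$ out of a family of ever-larger finite antichains, it yields the $\sigma$-bounded chain condition of the fragmentation: a sequence in $I$ all of whose terms lay in a single $C_n$ is exactly what this lemma forbids, so the antichains inside any fixed $C_n$ cannot have unbounded size.
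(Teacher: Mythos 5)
Your proposal is correct and matches the paper's proof exactly: the paper's one-line argument is precisely your main observation that the sequence $\{a_k\}_k\in I$ itself witnesses $a_n\notin C_n$. The extra remark that every $a_k$ with $k\ge n$ lies outside $C_n$ (via the shifted tail and (M2)) is a sound but unnecessary strengthening.
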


\begin{proof}
If $\{a_k\}_k \in I$ then $a_n \notin C_n$.
\end{proof}

\begin{lemma}
The fragmentation is $\sigma$-bounded cc.
\end{lemma}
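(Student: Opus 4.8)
The plan is to argue by contradiction using (M6), which is precisely the axiom designed to control the sizes of antichains sitting inside the sets $C_n$. Fix $n$ and suppose, toward a contradiction, that no bound $K_n$ exists; that is, $C_n$ contains antichains of arbitrarily large size. First I would record that this yields, for every $k \ge 1$, a \emph{finite} antichain $A_k \subseteq C_n$ with $|A_k| \ge k$: if $C_n$ already contains arbitrarily large finite antichains we simply select one of size at least $k$, and if $C_n$ happens to contain an infinite antichain we pass to a finite subset of size $k$. Either way we obtain a sequence of finite antichains $A_1, A_2, \ldots$ in $B$ with $|A_k| \ge k$ for every $k$.

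Next I would feed this sequence into (M6). Since each $A_k$ is a finite antichain in $B$ with $|A_k| \ge k$, property (M6) furnishes elements $a_k \in A_k$ such that the sequence $\{a_k\}_k$ belongs to $I$. This is the heart of the argument: (M6) converts ``unboundedly large antichains lying inside $C_n$'' into an actual member of the ideal $I$ whose terms are drawn from those very antichains.

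Finally I would extract the contradiction from the defining property of $C_n$ together with the preceding Lemma. Since $\{a_k\}_k \in I$, the preceding Lemma (equivalently, the definition of $C_n$ read off at coordinate $n$) gives $a_n \notin C_n$. But $a_n \in A_n$ and $A_n \subseteq C_n$ by construction, so $a_n \in C_n$, a contradiction. Hence for each $n$ a bound $K_n$ must exist, and the fragmentation is $\sigma$-bounded cc.

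I do not expect a serious obstacle here; the one point requiring care is the bookkeeping with the indices. Specifically, the family $A_1, A_2, \ldots$ must be indexed by all of $N$ (not merely by $k \ge n$) and every $A_k$ must be chosen inside the \emph{same} fixed $C_n$ whose antichains we are bounding, because the contradiction compares the $n$-th term $a_n$ of the ideal sequence with membership in $C_n$ at exactly that coordinate. Once this alignment is in place, the collision between $a_n \in A_n \subseteq C_n$ and $a_n \notin C_n$ is immediate, and the whole argument is short.
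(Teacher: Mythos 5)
Your proof is correct and follows the paper's argument essentially verbatim: fix $n$, extract finite antichains $A_k \subseteq C_n$ with $|A_k| \ge k$, apply (M6) to get $\{a_k\}_k \in I$ with $a_k \in A_k$, and derive the contradiction at coordinate $n$. The only cosmetic difference is that you read off $a_n \notin C_n$ directly from the definition of $C_n$, where the paper cites its Lemma 3.2 (whose proof is exactly that observation), so the two arguments coincide.
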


\begin{proof}
Let $n\in N$, and assume that $C_n$ has arbitrarily large finite antichains. For
each $k\in N$ let $A_k$ be an antichain in $C_n$ of size at least $k$. By (M6)
there is a sequence $\{a_k\}_k \in I$ such that $a_k\in A_k$ for all $k$.
Then $\{a_k: k\in N\}$ is a subset of $C_n$, a contradiction.
\end{proof}

\begin{lemma}
If $\{a_k\}_k$ is a sequence such that $a_k\notin C_k$ for every $k$, 
then $\{a_k\}_k\in I$.
\end{lemma}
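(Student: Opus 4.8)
The plan is to obtain the conclusion directly from the diagonal axiom (M5), once I produce the right witnessing family. First I would unwind the definition of $C_k$. By construction, $a\notin C_k$ holds precisely when there is \emph{some} sequence in $I$ whose $k$-th term equals $a$. Hence the hypothesis $a_k\notin C_k$ furnishes, for each $k$, a sequence $\{b^k_j\}_j\in I$ with $b^k_k=a_k$.

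Next I would feed the family $\{b^k_j\}_j$ (one sequence for each $k\in N$) into (M5). Since every $\{b^k_j\}_j$ lies in $I$, the diagonal sequence $\{b^n_n\}_n$ lies in $I$ as well. The crux is that this diagonal is exactly the given sequence: by our choice $b^n_n=a_n$ for every $n$, so $\{a_n\}_n=\{b^n_n\}_n\in I$, as required.

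I expect no genuine obstacle here; the whole content is the observation that (M5) is engineered precisely to turn ``each term $a_k$ escapes its own set $C_k$'' into membership in $I$, and the only thing to watch is the index bookkeeping that makes the diagonal of the witnessing sequences coincide with $\{a_k\}_k$. It is worth noting that this lemma is the converse of Lemma 3.1: that lemma records that a sequence in $I$ must have $a_n\notin C_n$, while the present one recovers membership in $I$ from the same escape condition, so together they show that the fragmentation $\{C_n\}_n$ captures $I$ along the diagonal.
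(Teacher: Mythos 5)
Your proof is correct and is essentially identical to the paper's: both unwind the definition of $C_k$ to obtain, for each $k$, a witnessing sequence in $I$ whose $k$-th term is $a_k$, and then apply the diagonal axiom (M5) to conclude $\{a_k\}_k\in I$. Your closing remark that this is the converse of Lemma 3.1 is accurate but not part of the required argument.
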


\begin{proof} 
For every $k$ there exists a sequence 
$\{a^k_n\}_n \in I$ such that $a_k=a^k_k$. 
By (M5), $\{a^k_k\}_k \in I$.
\end{proof}

\begin{lemma}
For every $n$
there exists a $k>n$ such that for every $c\in C_n$, if $c=a\cup b$
then either $a\in C_k$ or $b\in C_k$.
\end{lemma}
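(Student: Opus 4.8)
The plan is to argue by contradiction, manufacturing from a failure of the splitting property a single sequence in $I$ whose range lies inside one $C_n$, which is exactly what the first lemma of this section forbids. So I would fix $n$ and suppose the conclusion fails, i.e. that no $k>n$ works. Unwinding the negation, for every $k>n$ there are elements $a_k,b_k\in B^+$ and $c_k\in C_n$ with $c_k=a_k\cup b_k$, $a_k\notin C_k$, and $b_k\notin C_k$; I would fix one such triple $(c_k,a_k,b_k)$ for each $k>n$.

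Next I would shift the index so that these witnesses take the exact form required by the lemma asserting that $a_k\notin C_k$ for every $k$ implies $\{a_k\}_k\in I$. For $j\ge 1$ put $\widetilde a_j=a_{n+j}$, $\widetilde b_j=b_{n+j}$, and $\widetilde c_j=c_{n+j}$. Since the fragmentation is increasing we have $C_j\subseteq C_{n+j}$, so from $\widetilde a_j=a_{n+j}\notin C_{n+j}$ it follows that $\widetilde a_j\notin C_j$, and likewise $\widetilde b_j\notin C_j$, for every $j$. Applying that lemma twice then gives $\{\widetilde a_j\}_j\in I$ and $\{\widetilde b_j\}_j\in I$.

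I would then invoke (M4): the termwise union $\{\widetilde a_j\cup\widetilde b_j\}_j=\{\widetilde c_j\}_j$ has an infinite subsequence $\{\widetilde c_{j_m}\}_m$ belonging to $I$. But $\widetilde c_j=c_{n+j}\in C_n$ for every $j$, so the range $\{\widetilde c_{j_m}:m\}$ is a subset of $C_n$. This contradicts the first lemma of the section, which says that no member of $I$ can have its range contained in a single $C_n$, and the contradiction yields the required $k$.

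I expect the only delicate point to be the index bookkeeping in the second step: both (M4) and the $\notin C_k$-criterion are stated for sequences indexed from the first term, so one must absorb the offset $n$ and exploit the monotonicity $C_j\subseteq C_{n+j}$ to turn ``$\notin C_{n+j}$'' into ``$\notin C_j$''. It is worth noting that (M4) delivers only a subsequence in $I$, but this costs nothing here, since any subsequence of a sequence whose range lies in $C_n$ again has range in $C_n$, and the first lemma applies to every member of $I$.
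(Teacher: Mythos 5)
Your proof is correct and follows essentially the same route as the paper: negate the statement, use the monotonicity $C_j\subseteq C_{n+j}$ to put the witnesses into the form required by the lemma that $a_k\notin C_k$ for all $k$ forces $\{a_k\}_k\in I$, apply that lemma to both sequences, then use (M4) and conclude by contradicting the lemma that no sequence in $I$ has its range inside a single $C_n$. The index bookkeeping you spell out is exactly what the paper compresses into its remark that $a_k\notin C_{n+k}$ ``clearly'' gives $a_k\notin C_k$.
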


\begin{proof}
Otherwise, for every $k$ there exist $c_k=a_k\cup b_k \in C_n$
such that $a_k\notin C_{n+k}$ and $b_k\notin C_{n+k}$. 
Clearly, $a_k\notin C_k$ and $b_k\notin C_k$. By Lemma 3.4.
$\{a_k\}_k\in I$ and $\{b_k\}_k\in I$, and by (M4),
$\{c_k\}_k=\{a_k \cup b_k\}_k$ has an infinite 
subsequence that is in $I$. That subsequence is included
in $C_n$, contrary to Lemma 3.2.
\end{proof}

Therefore $\{C_n\}_n$ has a subfragmentation that is graded, and we have

\begin{corollary}
If $B$ has an $M$-ideal then it has a graded $\sigma$-bounded cc
fragmentation.
\end{corollary}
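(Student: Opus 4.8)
The plan is to build the graded $\sigma$-bounded cc fragmentation by combining the lemmas already proved, the single genuine step being to upgrade the ``grading into some later stage'' supplied by Lemma 3.5 into grading between \emph{consecutive} stages, as the definition of graded demands. Two of the three requirements are already met by the full sequence $\{C_n\}_n$: it was verified above to be a fragmentation, and Lemma 3.3 shows it is $\sigma$-bounded cc. So everything reduces to arranging the grading condition.

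Lemma 3.5 gives, for each $n$, an index $f(n)>n$ such that any $c=a\cup b\in C_n$ forces $a\in C_{f(n)}$ or $b\in C_{f(n)}$; note this names some stage $f(n)$, not $n+1$, so $\{C_n\}_n$ itself need not be graded. I would therefore thin the sequence: set $n_1=1$, $n_{k+1}=f(n_k)$, and put $D_k=C_{n_k}$. Because $f(n)>n$, the indices $n_1<n_2<\cdots$ strictly increase and so are cofinal in $N$.

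It then remains to verify that $\{D_k\}_k$ works. It is a fragmentation: each $D_k=C_{n_k}$ is upward closed, $D_k\subset D_{k+1}$ since the $C$'s increase and $n_k<n_{k+1}$, and $\bigcup_k D_k=\bigcup_n C_n=B^+$ by cofinality of the $n_k$. It is $\sigma$-bounded cc because $D_k=C_{n_k}$ inherits the bound $K_{n_k}$ from Lemma 3.3. And it is graded: if $a\cup b\in D_k=C_{n_k}$, then Lemma 3.5 applied at $n_k$, together with $f(n_k)=n_{k+1}$, yields $a\in C_{n_{k+1}}=D_{k+1}$ or $b\in C_{n_{k+1}}=D_{k+1}$, which is exactly the grading condition at level $k$. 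The only point needing care is this last bookkeeping --- making sure the stage named by Lemma 3.5 is precisely the successor stage $D_{k+1}$ of the relabeled sequence --- which is exactly why the recursion is defined by iterating $f$. No further appeal to the $M$-ideal axioms is needed beyond what is packaged in Lemmas 3.3 and 3.5, so I expect no real obstacle here: the substantive work was done in those lemmas, and the corollary is a clean reindexing.
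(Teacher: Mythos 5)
Your proof is correct and follows essentially the same route as the paper, which simply asserts that $\{C_n\}_n$ ``has a subfragmentation that is graded'': your explicit recursion $n_{k+1}=f(n_k)$ and the verification that $D_k=C_{n_k}$ remains a $\sigma$-bounded cc fragmentation is exactly the reindexing the paper leaves implicit. Nothing is missing.
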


In Section 5 we obtain a measure on $B$ under
the assumption that $B$ has a graded
$\sigma-$bounded cc fragmentation.

\section{Kelley's Theorem}

In this Section we introduce Kelley's condition for the existence
of finitely additive measure on a Boolean algebra.

Let $B$ be a Boolean set algebra, $B\subset P(S)$ for some set $S$.

\begin{definition}
Let $C$ be a subset of $B^+$. For every finite sequence $s=\langle
c_1,...c_n\rangle$ in $C$, let $\kappa_s=k/n$ where $k$ is the
largest size of a subset $J\subset\{1,...,n\}$ such that $\bigcap_{i\in J} c_i$
is nonempty. The \emph{intersection number} of $C$ is the infimum
$\kappa=\inf \kappa_s$ over all finite sequences $s$ in $C$.
\end{definition}

The sequences $s$ do not have to be nonrepeating.

Note that for any $n_0$, the infimum $\inf \kappa_s$ taken over all
sequences $s$ of length $n\ge n_0$ is still $\kappa$: if $s$ is a sequence
of length $n<n_0$, let $t$ be such that $t\cdot n\ge n_0$, and let $s^*$ be
a sequence we get when repeating each term of $s$ $t$-times. Then
$\kappa_{s^*}=\kappa_s$.

\begin{theorem}
(Kelley, \cite{Ke}.) Let $C\subset B^+$ have a positive intersection
number $\kappa$. Then there exists a finitely additive measure $m$ on $B$,
not necessarily strictly positive,
such that $m(c)\ge \kappa$ for all $c\in C$.
\end{theorem}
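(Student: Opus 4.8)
The plan is to reduce the statement to a finite linear-programming duality and then pass to the infinite case by a compactness argument, so that no infinite-dimensional separation theorem is needed.

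First I would rewrite the intersection number in a form adapted to measures. For a sequence $s=\langle c_1,\dots,c_n\rangle$ in $C$, a subset $J\subseteq\{1,\dots,n\}$ satisfies $\bigcap_{i\in J}c_i\ne\emptyset$ exactly when some point $x\in S$ lies in all $c_i$ with $i\in J$; hence the largest such $J$ has size $k=\max_{x\in S}\sum_{i=1}^n\chi_{c_i}(x)$, and $\kappa_s=\tfrac1n\max_{x\in S}\sum_i\chi_{c_i}(x)$. Since sequences may repeat terms, a sequence encodes a finitely supported probability weighting $\lambda$ on its distinct terms, and using the normalization remark preceding the theorem together with the convexity and continuity of $\lambda\mapsto\max_{x}\sum_j\lambda_j\chi_{d_j}(x)$, I obtain
$$\kappa=\inf_{\lambda}\ \max_{x\in S}\ \sum_j\lambda_j\chi_{d_j}(x),$$
the infimum taken over all finitely supported probability weights $\lambda$ on finite subsets $\{d_j\}\subset C$. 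I also record that any finite $C_0\subset C$ has intersection number $\ge\kappa$, since its defining infimum ranges over fewer sequences.

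Next I would settle the finite case. Fix a finite $C_0=\{d_1,\dots,d_N\}\subset C$ and pass to the finite subalgebra $B_0\subset B$ it generates, with atoms $a$. A finitely additive probability measure on $B_0$ is just a system of weights $p_a\ge0$ with $\sum_a p_a=1$, and I seek such weights with $\sum_{a\le d_j}p_a\ge\kappa$ for every $j$. Feasibility of this linear program is governed by the finite minimax theorem: the guaranteed value $\max_p\min_j\sum_{a\le d_j}p_a$ equals $\min_\lambda\max_a\sum_j\lambda_j\chi_{d_j}(a)$, which is precisely the intersection number of $C_0$ and hence $\ge\kappa$. Thus the program is feasible, yielding a finitely additive measure on $B_0$, which I extend to a finitely additive measure $m_{C_0}$ on all of $B$; such an extension always exists, e.g.\ by Hahn--Banach applied to the associated positive functional, and it does not alter the values $m_{C_0}(d_j)$.

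Finally I would glue these finite solutions. Let $K$ denote the set of all finitely additive measures $m$ on $B$ with $m\ge0$ and $m(\mathbf 1)=1$ (not necessarily strictly positive), viewed inside $[0,1]^B$ with the product topology; it is closed, hence compact, and convex. For each finite $C_0\subset C$ the set $K_{C_0}=\{m\in K:m(c)\ge\kappa\text{ for all }c\in C_0\}$ is closed and, by the previous step, nonempty; since $K_{C_0}\cap K_{C_0'}=K_{C_0\cup C_0'}$ is nonempty, the family $\{K_{C_0}\}$ has the finite intersection property. By compactness of $K$ the intersection $\bigcap_{C_0}K_{C_0}$ is nonempty, and any $m$ in it is the desired finitely additive measure with $m(c)\ge\kappa$ for all $c\in C$. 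The hard part will be the finite-case duality: identifying the value of the covering linear program with the combinatorial intersection number. This is exactly where the von Neumann minimax theorem enters, and care is needed to verify that the maximum over atoms of $B_0$ coincides with the maximum over points of $S$ (which holds because each $d_j$ is a union of atoms), so that $\min_\lambda\max_a\sum_j\lambda_j\chi_{d_j}(a)$ really equals the intersection number of $C_0$ rather than something smaller.
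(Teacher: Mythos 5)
Your proposal is correct, and it takes a genuinely different route from the one the paper relies on: the paper does not prove Theorem 4.2 itself but cites Kelley, whose argument (as Remark 1 in Section 6 notes) runs through the Hahn--Banach theorem applied to a sublinear functional on the space of bounded functions on $S$ --- a single infinite-dimensional separation. You instead localize the duality to finite subalgebras, where it becomes the von Neumann minimax theorem for the finite matrix game with payoff $[a\le d_j]$ (atoms versus indices), and then glue the finite solutions by Tychonoff compactness of the state space $K\subset[0,1]^B$. The two delicate points you flag are exactly the right ones and both check out: $\max_a\sum_j\lambda_j[a\le d_j]=\max_{x\in S}\sum_j\lambda_j\chi_{d_j}(x)$ because the nonempty atoms of $B_0$ partition $S$ and each $d_j$ is a union of atoms, and the passage from sequences (rational weights) to arbitrary weights $\lambda$ works because for a fixed finite $\{d_j\}$ the function $\lambda\mapsto\max_x\sum_j\lambda_j\chi_{d_j}(x)$ is a maximum of finitely many linear functions, hence continuous on the simplex. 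One simplification worth noting: your Hahn--Banach extension of $m_{C_0}$ from $B_0$ to $B$ is unnecessary --- pick a point $x_a$ in each atom $a$ of $B_0$ and set $m_{C_0}=\sum_a p_a\,\delta_{x_a}$; this is finitely additive on all of $B$ and gives $m_{C_0}(d_j)=\sum_{a\le d_j}p_a\ge\kappa$ since $x_a\in d_j$ iff $a\le d_j$. With that change the finite step is completely elementary and the only nonconstructive ingredient is Tychonoff for $[0,1]^B$ (ultrafilter-lemma strength), which is consistent with the paper's remark, via Luxemburg, that some such principle is unavoidable; what Kelley's route buys in exchange is brevity, obtaining the measure in one stroke rather than through a net of finite approximations.
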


\begin{corollary}
If a Boolean algebra $B$ has a fragmentation $\{C_n\}$ such that
each $C_n$ has a positive intersection number, then $B$
carries a strictly positive finitely additive measure.
\end{corollary}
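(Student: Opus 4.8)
The plan is to apply Kelley's Theorem to each piece $C_n$ of the fragmentation separately and then combine the resulting measures into one strictly positive measure by taking a convergent weighted average. The guiding idea is that a fragmentation exhausts $B^+$, so each nonzero element lies in some $C_n$ and will be assigned positive mass by the measure associated to that piece.

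First I would use the hypothesis that each $C_n$ has a positive intersection number $\kappa_n>0$. By Kelley's Theorem, for every $n$ there is a finitely additive measure $m_n$ on $B$ with $m_n(\1)=1$ and $m_n(c)\ge\kappa_n$ for all $c\in C_n$. Each $m_n$ is thus a finitely additive probability measure (not necessarily strictly positive): $m_n(\0)=0$, $m_n(\1)=1$, and $m_n(a\cup b)=m_n(a)+m_n(b)$ whenever $a\cap b=\0$.

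Next I would define
$$m=\sum_{n=1}^{\infty}2^{-n}m_n.$$
Since every $m_n$ takes values in $[0,1]$, this series converges at each element of $B$, so $m$ is well defined and takes values in $[0,1]$. Finite additivity and the normalization then pass through the sum termwise: $m(\0)=0$, $m(\1)=\sum_{n}2^{-n}=1$, and whenever $a\cap b=\0$ one has $m(a\cup b)=\sum_{n}2^{-n}\bigl(m_n(a)+m_n(b)\bigr)=m(a)+m(b)$, so $m$ satisfies clause (ii) of Definition~\ref{meas}.

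The only remaining point, and the one place the fragmentation hypothesis is essential, is strict positivity. Given $a\in B^+$, the condition $\bigcup_n C_n=B^+$ yields an index $n$ with $a\in C_n$; then $m(a)\ge 2^{-n}m_n(a)\ge 2^{-n}\kappa_n>0$. This establishes clause (i), so $m$ is a measure in the sense of Definition~\ref{meas}, as required. I do not anticipate a genuine obstacle here: the computation is routine, and the only care needed is to normalize each $m_n$ so that $m_n(\1)=1$ and the weights $2^{-n}$ sum to $1$, guaranteeing $m(\1)=1$.
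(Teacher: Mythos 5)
Your proof is correct and is exactly the standard argument the paper leaves implicit: the corollary is stated without proof as an immediate consequence of Kelley's Theorem, and the intended route is precisely your weighted sum $m=\sum_n 2^{-n}m_n$ of the normalized Kelley measures, with strict positivity coming from $\bigcup_n C_n=B^+$. Your attention to normalizing each $m_n$ so that $m_n(\1)=1$ is the right (and only) point of care.
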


\section{The Kalton-Roberts Method}

We complete the proof by proving the following:

\begin{lemma}
Let $B$ be a Boolean algebra that has a graded $\sigma$-bounded
cc fragmentation $\{C_n\}$. Then for every $n$, $C_n$ has a positive intersection
number.
\end{lemma}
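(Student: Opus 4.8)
The plan is to reduce the statement to producing a single ``popular atom.'' Fix $n$ and an arbitrary finite sequence $s=\langle c_1,\dots,c_N\rangle$ in $C_n$, and let $B_0\subseteq B$ be the finite subalgebra generated by $c_1,\dots,c_N$, with atoms $P_1,\dots,P_M$. For a subset $J\subseteq\{1,\dots,N\}$ the element $\bigcap_{i\in J}c_i$ lies in $B_0$, and it is nonempty exactly when some atom $P$ satisfies $P\le c_i$ for all $i\in J$; hence $\kappa_s=\max_{P}\,|\{i:P\le c_i\}|/N$, the maximum being over atoms of $B_0$. So it suffices to find a constant $\delta=\delta(n)>0$, independent of $s$, and an atom $P$ lying below at least $\delta N$ of the $c_i$; then $\kappa(C_n)=\inf_s\kappa_s\ge\delta>0$.

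Next I would distill the combinatorial content of the two hypotheses. Define, for $a\in B_0^+$, the level $\lambda(a)=\min\{m:a\in C_m\}$, which is finite since $\bigcup_m C_m=B^+$. Upward closure gives monotonicity ($a\le b$ implies $\lambda(a)\ge\lambda(b)$), and gradedness gives a splitting bound: if $a\cup b\in C_m$ then $\min(\lambda(a),\lambda(b))\le m+1$, and iterating this dichotomy over a binary grouping shows that any partition of a fixed $c\in C_n$ into $t$ pieces contains a piece of level at most $n+\lceil\log_2 t\rceil$. From $\sigma$-bounded cc I extract the crucial finiteness: the atoms $P_1,\dots,P_M$ are pairwise disjoint, so those of level $\le m$ form an antichain in $C_m$ and hence number at most $K_m$. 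In other words $2^{-\lambda}$ behaves like a submeasure (subadditive up to a factor $2$) and $\sigma$-bounded cc is exactly its uniform exhaustivity, which is precisely the setting of the Kalton--Roberts method.

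The heart of the argument is then the Kalton--Roberts concentration step. Suppose, toward a contradiction, that no atom is popular, i.e.\ every atom lies below fewer than $\delta N$ of the $c_i$. The goal is to manufacture from $s$ an antichain of size exceeding $K_m$ inside some fixed level $C_{m(n)}$, contradicting $\sigma$-bounded cc. Using gradedness to split each $c_i$ into a chunk of controlled level, while the ``no popular atom'' hypothesis prevents these chunks from piling up on a bounded set of atoms, one extracts pairwise disjoint elements $b_1,\dots,b_t$, all of level at most $m(n)$, with $t>K_{m(n)}$ as soon as $\delta$ is taken below the threshold the method supplies. This quantitative extraction of many disjoint bounded-level chunks is the Kalton--Roberts combinatorial lemma we are borrowing; once $\delta=\delta(n)$ is chosen accordingly, the contradiction shows a popular atom must exist, and therefore $\kappa(C_n)>0$ (after which Theorem 4.2 and Corollary 4.3 deliver the measure).

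The main obstacle is this concentration lemma, and it is genuinely delicate. The naive difficulty is that a single $C_n$-element can be a union of enormously many atoms of arbitrarily high level, so no individual atom inherits a low level and a plain monotonicity-plus-averaging count fails. Gradedness only controls splitting logarithmically, whereas the chain condition only caps the number of large atoms at each level; reconciling a logarithmic splitting bound with a linear-in-$N$ popular atom is exactly what the iterative disjoint-chunk selection of Kalton and Roberts is designed to do. I would therefore expect the reduction of the first two paragraphs to be routine and to concentrate essentially all of the work on verifying that the extraction keeps the chunks disjoint and of bounded level while staying linear in $N$.
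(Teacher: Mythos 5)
Your first paragraph is sound and matches the paper's own setup exactly: the paper's elements $b_I=\bigcap\{c_i:i\in I\}\cap\bigcap\{-c_i:i\notin I\}$ are precisely your atoms of $B_0$, and finding a nonempty $b_J$ with $|J|$ a fixed fraction of $N$ is your ``popular atom.'' But from there on there is a genuine gap: the entire combinatorial core is invoked as a black box (``the Kalton--Roberts combinatorial lemma we are borrowing''), and your description of that black box does not match what the lemma actually says or does. The lemma (Lemma 5.2 of the paper) is a purely finite statement about set systems: for suitable $k\le p\le m$ with $p/k\ge 15m/p$ there is a family of three-point sets $A_i\subset P$, $i\in M$, such that every $I\subset M$ with $|I|\le k$ admits a one-to-one choice function $f_I$ with $f_I(i)\in A_i$ (via Hall's marriage theorem). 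It says nothing about ``extracting many disjoint bounded-level chunks''; the disjointness is manufactured afterwards by the specific device of setting $a_{ij}=\bigcup\{b_I:|I|\le k,\ i\in I,\ f_I(i)=j\}$ and observing that injectivity of each $f_I$ makes $\{a_{ij}:i\in M\}$ pairwise disjoint for each fixed $j$. That device --- labelling each atom's memberships by an injective choice so that disjointness of the $b_I$ propagates to the $a_{ij}$ --- is the missing idea, and nothing in your sketch supplies it or a substitute; you yourself concede that ``essentially all of the work'' lies in this step.

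Moreover, your intended contradiction is inverted relative to how the hypotheses are actually used. You propose to contradict $\sigma$-bounded cc by building an antichain of size exceeding $K_{m(n)}$; in the paper the bound $K=K_{n+2}$ is instead a positive counting input: for each $j\in P$ at most $K$ of the pairwise disjoint $a_{ij}$ lie in $C_{n+2}$, hence at most $pK<m$ pairs $(i,j)$ do, so some $i$ has all three $a_{ij}$, $j\in A_i$, outside $C_{n+2}$ --- and the contradiction is with \emph{gradedness}, since $c_i=a_{i,j_1}\cup a_{i,j_2}\cup a_{i,j_3}$ would then force $c_i\notin C_n$. Relatedly, your logarithmic splitting bound for $t$-piece partitions, while true, is a red herring: the method needs gradedness only for a three-way split, costing exactly two levels, which is why three-point sets $A_i$ suffice and why $C_{n+2}$ appears. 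As it stands, the proposal correctly reduces the problem and correctly names the method, but does not prove the lemma.
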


To prove the lemma, we adapt the Kalton-Roberts proof from \cite{KR}
that shows that a uniformly exhaustive submeasure is equivalent to  a measure.

The Kalton-Roberts proof uses the following combinatorial lemma
(Proposition 2.1 of \cite {KR}) which they called ``well known''. Part (a)
is verified by a counting argument; Part (b) follows from Part (a) by Hall's
``Marriage Theorem'' , see B\'ela Bollob\'as: ``Modern Graph Theory'' (1998), pp. 77-78.

\begin{lemma}
Let $M$ and $P$ be finite sets with $|M|=m$ and $|P|=p\le m$,
and let $k$, $3\le k \le p$ be an integer such that $p/k \ge 15\cdot m/p$.
Then

(a) There exists an indexed family $\{A_i: i\in M\}$ such that each $A_i$ is a three
point subset of $P$ and such that for every $I\subset M$ with $|I|\le k$,
$|\bigcup_{i\in I} A_i|>|I|.$

(b) It follows that for every $I\subset M$ with $|I|\le k$ there exists a one-to-one
choice function $f_I$ on $\{A_i: i\in I\}$.
\end{lemma}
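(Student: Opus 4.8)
The plan is to prove the two parts in sequence: part (a) by a first-moment (counting) argument, and part (b) as an immediate consequence of Hall's Marriage Theorem. I would begin with part (a). Choose each $A_i$, $i\in M$, independently and uniformly at random among the $\binom{p}{3}$ three-point subsets of $P$; equivalently, count families $\{A_i\}_{i\in M}$ out of the total $\binom{p}{3}^m$. Call a family \emph{bad} if there is some $I\subset M$ with $|I|\le k$ and $|\bigcup_{i\in I}A_i|\le|I|$. Since each $A_i$ has three points, a bad $I$ must have $|I|\ge 3$, so it suffices to control index sets of size $j$ with $3\le j\le k$. My goal is to show the expected number of bad sets is strictly less than $1$, which forces the existence of a family satisfying the conclusion of (a).

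To estimate this expectation I would use the union bound. If $|\bigcup_{i\in I}A_i|\le j$ with $|I|=j$, then all the $A_i$ ($i\in I$) lie inside some $Q\subset P$ with $|Q|=j$ (which exists because $j\le k\le p$). For a fixed $Q$ the probability that a single $A_i$ lands in $Q$ is $\binom{j}{3}/\binom{p}{3}$, and these events are independent over $i\in I$. Summing over the $\binom{m}{j}$ choices of $I$, the $\binom{p}{j}$ choices of $Q$, and $j$ from $3$ to $k$, the expected number of bad sets is at most
$$\sum_{j=3}^{k}\binom{m}{j}\binom{p}{j}\left(\frac{\binom{j}{3}}{\binom{p}{3}}\right)^{j}.$$
Using the standard bounds $\binom{m}{j}\le(em/j)^j$, $\binom{p}{j}\le(ep/j)^j$, and $\binom{j}{3}/\binom{p}{3}\le(j/p)^3$, each summand collapses to $(e^2mj/p^2)^j$.

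The main obstacle, and the whole point of the constant $15$, is to convert the hypothesis into a bound on $mj/p^2$. The assumption $p/k\ge 15\,m/p$ rearranges to $k\le p^2/(15m)$, so $j\le k$ gives $mj/p^2\le 1/15$, whence each summand is at most $(e^2/15)^j<(1/2)^j$. Therefore the whole sum is bounded by $\sum_{j\ge 3}2^{-j}=1/4<1$, and part (a) follows. I expect the delicate bookkeeping here to be the only real difficulty: checking that the crude binomial estimates still leave enough slack for $(e^2/15)^j$ to beat a geometric series, and verifying that the side conditions are used correctly ($k\ge 3$ so that bad sets can exist at all and the sum genuinely starts at $j=3$, and $k\le p$ so that a $j$-element $Q\subseteq P$ is available for all $j\le k$).

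Finally, part (b) is routine given (a). Fix the family $\{A_i\}$ produced in (a) and any $I\subset M$ with $|I|\le k$. For every $J\subseteq I$ we have $|J|\le k$, so the conclusion of (a) gives $|\bigcup_{i\in J}A_i|>|J|$, a fortiori $\ge|J|$, which is exactly Hall's condition for the bipartite membership graph joining $i\in I$ to the points of $A_i$. Hall's Marriage Theorem then yields an injective choice function $f_I$ with $f_I(i)\in A_i$, as required.
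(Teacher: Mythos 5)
Your proof is correct and follows exactly the route the paper indicates: the paper gives no detailed argument for this lemma, saying only that part (a) is ``verified by a counting argument'' and that part (b) follows via Hall's Marriage Theorem (citing Bollob\'as), which is precisely your first-moment union bound plus Hall. Your estimates check out --- $k\le p^2/(15m)$ gives $e^2 mj/p^2 \le e^2/15 < 1/2$, so the expected number of bad index sets is at most $\sum_{j\ge 3} 2^{-j} = 1/4 < 1$ --- so you have in effect supplied the details the paper leaves to the references.
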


We shall now apply the Kalton-Roberts method to prove Lemma 5.1.

{\it Proof of Lemma 5.1.}
Let $\{C_n\}$ be a graded $\sigma$-bounded cc fragmentation
of a Boolean algebra $B$, and let us fix an integer $n$. We prove that
the intersection number of  $C_n$ is positive, namely $\ge 1/(30K^2)$
where $K=K_{n+2}$ is the maximal size of an antichain in $C_{n+2}$.

We show that for every $m\ge 100K^2$, and every sequence
$\{c_1,...,c_m\}$ in $C_n$ there exists  some $J\subset m$
of size $\ge m/(30K^2)$ such that $\bigcap_{i\in J} c_i$ is nonempty.

Let $M=\{1,...,m\}$ with $m\ge 100K^2$ and let $c_1,...,c_m\in C_n$.
For each $I\subset M$, let

$$b_I = \bigcap\{c_i:i\in I\}\cap\bigcap\{-c_i:i\notin I\}.$$ The
sets $b_I$ are pairwise disjoint (some may be empty) and
$\bigcup\{b_I: I\subset M\}=\1$. Note that for each $i\in M$,
$\bigcup\{b_I: i\in I\}=c_i$. We shall find a sufficiently large
set $J\subset M$ with nonempty $b_J$.

We shall apply Lemma 5.2. First let $k\ge 3$ be the largest $k$ such that
$k/m < 1/(30K^2)$ (there is such because $3/m\le 3/(100K^2).$
We have $k<m$ and $(k+1)/m\ge 1/(30K^2)$. Then let $p$ be the largest $p\ge k$
such that $p/m<1/K$ (there is such because $k/m<1/K$.)

We verify the assumption of the lemma, $p/k\ge15m/p$
(using $p/(p+1)\ge 3/4$):
$$\frac{p}{k}=\frac{p}{p+1}\cdot \frac{p+1}{m}\cdot\frac{m}{k}\ge \frac{3}{4}\cdot \frac{1}{K}\cdot 30K^2\ge 20K$$
and
$$15\frac{m}{p}\le 15\cdot\frac{p+1}{p}\cdot \frac{m}{p+1}\le 15\cdot \frac{4}{3}\cdot K=20K.$$

Now we apply the Lemma: Let $P=\{1,...,p\}$. There exist three point sets $A_i\subset P$,
$i\in M$, and one-to-one functions $f_I$ on all $I\subset M$ of size $\le k$
with $f_I(i)\in A_i$ for all $i\in I$.

We shall prove that there exists a $J\subset M$ of size $\ge k+1$ (and hence $\ge m/(30K^2)$) such that
$b_J$ is nonempty. By contradiction, assume that there is no such $J$. Then
$$\bigcup\{b_I:|I|\le k\}=\1 \text{ and for each } i\in M, \, c_i=\bigcup\{b_I:|I|\le k \text { and } i\in I\}.$$
For each $i\in M$ and $j\in P$ let
$$a_{ij}=\bigcup\{b_I: |I|\le k,\, i\in I \text{ and } f_I(i)=j\}.$$
Note that for each $i\in M$, $c_i =a_{i,j_1}\cup a_{i,j_2}\cup a_{i,j_3}$ where $A_i=\{j_1,j_2,j_3\}$.

Let $j\in P$. We claim that the $a_{ij}$, $i\in M$, are pairwise disjoint:
If $a_{i_1,j}\cap a_{i_2,j}$ is nonempty, then because the $b_I$ are pairwise disjoint
there is some $I$ such that $i_1\in I$ and $i_2\in I$, and because
$f_I(i_1)=j=f_I(i_2)$ and $f_I$ is one-to-one, we have $i_1=i_2$.
Hence the $a_{ij}$, $i\in M$, are pairwise disjoint, and so only
at most $K$ of them belong to $C_{n+2}$.

Consequently, at most $p\cdot K$ of the $a_{ij}$ belong to
$C_{n+2}$ and because $pK<m$, there exists an $i$ such that
$a_{ij}\notin C_{n+2}$ for all (three) $j\in A_i$.

But then $c_i=a_{i,j_1}\cup a_{i,j_2}\cup a_{i,j_3}\notin C_n$ because
the fragmentation is graded. This contradicts the assumption that $c_i\in C_n$.


\section{Final Remarks}

1. Our proof does not construct a measure on $B$ outright but uses Kelley's Theorem, 
which employs the Hahn-Banach Theorem, known to require a version of the Axiom of Choice. 
This is to be expected, as the existence of measures on Boolean algebras is 
known to need the Hahn-Banach Theorem (W.A.J. Luxemburg).

 2. If $m$ is a measure on a Boolean algebra then the fragmentation defined by
 $C_n = \{ a\in B^+: m(a)\ge 1/{2^n}\}$ is graded and $\sigma$-bounded cc. But the
 same is true if $m$ is only a uniformly exhaustive submeasure. Thus the proof of
 Lemma 5.1 is also a proof of the Kalton-Roberts Theorem.
 
 3. If a Boolean $\sigma-$algebra $B$ carries a $\sigma-$additive 
 measure, or is just a Maharam algebra, then it is weakly distributive.
 
 $B$ is a Maharam algebra if and only if it is uniformly weakly distributive (Balcar-Jech)
 if and only if it is weakly distributive and $\sigma-$finite cc (Todorcevic).
 
Talagrand's construction yields a Maharam algebra that is not a measure algebra and is 
 $\sigma-$bounded cc.
 
 $B$ carries a $\sigma-$additive 
 measure if and only if it is weakly distributive and uniformly concentrated (Jech).
 
4. Condition (M5) cannot be relaxed. If there exists a Suslin tree and if $B$ is the
corresponding Suslin algebra then $B$ is not a measure algebra (it is not
$\sigma$-finite cc) but the ideal of all sequences converging to $\0$ satisfies 
(M1)-(M4) and (M6), as well as this weaker version of (M5): if for every $k$, 
$\lim_n a^k_n =\0$ then there exist $n_k$ such that $\lim_n a^k_{n_k} =\0$.

\bibliographystyle{plain}

\begin{thebibliography}{10}






 
 \bibitem{Ga} 
H.~ Gaifman.
\newblock Concerning measures on Boolean algebras. 
\newblock {\em Pacific J. Math.}, 14: 61--73, 1964.

\bibitem{HT}
A.~Horn and A.~Tarski.
\newblock Measures in {B}oolean algebras.
\newblock {\em Trans. Amer. Math. Soc.}, 64:467--497, 1948.

\bibitem{KR}
N.~J. Kalton and J.~W. Roberts.
\newblock Uniformly exhaustive submeasures and nearly additive set functions.
\newblock {\em Trans. Amer. Math. Soc.}, 278:803--816, 1983.

\bibitem{KVP}
L.~V. Kantorovi{\v{c}}, B.~Z. Vulikh, and A.~G. Pinsker.
\newblock {\em Functional Analysis in Partially Ordered Spaces}.
\newblock 1950.
\newblock (in Russian).

\bibitem{Ke}
J.~L. Kelley.
\newblock Measures on {B}oolean algebras.
\newblock {\em Pacific J. Math.}, 9:1165--1177, 1959.

\bibitem{Mah}
D.~Maharam.
\newblock An algebraic characterization of measure algebras.
\newblock {\em Ann. of Math. (2)}, 48:154--167, 1947.

\bibitem{Sc}
D.~Mauldin, editor.
\newblock {\em The {S}cottish {B}ook}.
\newblock Birkh\"auser Boston, Mass., 1981.

\bibitem{Ta}
M.~Talagrand.
\newblock A simple example of pathological submeasure. 
\newblock {\em Math. Ann.}, 252:97--102, 1979/80. 


\bibitem{Tal}
M.~Talagrand
\newblock Maharam's Problem.
\newblock {\em Ann. of Math. (2)}, 168:981--1009, 2008.


\end{thebibliography}

\end{document}